\title{\LARGE\bf Adaptive Extremum Seeking Control via the RMSprop Optimizer}
\author{Patrick McNamee and Zahra Nili Ahmadabadi 
\thanks{P. McNamee and Z. Nili Ahmadabadi are with the  Department of Mechanical Engineering,
        San Diego State University, 
        San Diego, CA, USA
        {\tt\small pmcnamee5123@sdsu.edu} and 
        {\tt\small zniliahmadabadi@sdsu.edu}}%
\thanks{Thus work was funded by Department of the Navy, Office of Naval Research under ONR award number N000142412269. Any opinions, findings, and conclusions or recommendations expressed in this material are those of the author(s) and do not necessarily reflect the views of the Office of Naval Research.}%
}
\newcommand{\average}[1]{\overline{#1}}
\newcommand{\avgest}[1]{\estimate{\average{#1}}}
\newcommand{\avgerror}[1]{\tilde{\average{#1}}}
\newcommand{\ball}[2][]{\set{B}^{{#1}}_{{#2}}}
\newcommand{\comment}[1]{}
\newcommand{\continuous}{\set{C}}
\newcommand{\costfunction}{{J}}
\newcommandx{\derivative}[3][1={}, 2={}]{\frac{{\rmd^{#2} #1}}{{\rmd #3^{#2}}}}
\newcommand{\dithersignal}{s}
\newcommand{\error}[1]{\widetilde{#1}}
\newcommand{\estimate}[1]{\hat{#1}}
\newcommand{\gain}{{k}}
\newcommand{\gradient}{{g}}
\newcommand{\gradientmagnitudesquared}{v}
\newcommand{\gradientfilter}{m}
\newcommand{\hessian}{{H}}
\newcommand{\integernumbers}{\mathbb{Z}}
\newcommand{\lagrangian}{\mathcal{L}}
\newcommand{\lyapunov}{V}
	\newcommand{\optimal}[1]{\stationary{#1}} 
\newcommand{\parameter}{\theta}
\newcommand{\realnumbers}{\mathbb{R}}
	\newcommand{\positiverealnumbers}{\realnumbers_{\geq 0}}
\newcommand{\rmd}{{d}}
\DeclareMathOperator{\subto}{s.t.}
\newcommand{\set}[1]{\mathcal{#1}}
\newcommand{\sensoroutput}{y}
\DeclareMathOperator{\sgn}{sgn}
\newcommand{\stationary}[2][]{#2_{#1 *}}
\newcommand{\unitvector}{e}
\newcommand{\washoutfilterstate}{\xi}
\newcommand{\avgges}{\average{\estimate{\gradient}_i^2}}
\newtheorem{thm}{Theorem}
\newtheorem{lem}{Lemma}
\newtheorem{asmp}{Assumption}
\newtheorem{rem}{Remark}
\newtheorem{cor}{Corollary}
\tikzset{%
  block/.style    = {draw, thick, rectangle, minimum height = 3em,
    minimum width = 3em},
  sum/.style      = {draw, circle}, 
  input/.style    = {coordinate}, 
  output/.style   = {coordinate}, 
  gain/.style = {
  	draw, 
    isosceles triangle,
    isosceles triangle apex angle=50,
    minimum height = 3.0em,
    outer sep=0},
}
\begin{document}

\maketitle

\begin{abstract}
	Extremum Seeking Control (ESC) is a well-known set of continuous time algorithms for model-free optimization of a cost function. One issue for ESCs is the convergence rates of parameters to extrema of unknown cost functions. The local convergence rate depends on the second, or sometimes higher, order derivatives of the unknown cost function. To mitigate this dependency, we propose the use of the RMSprop optimizer for ESCs as RMSprop is an adaptive gradient-based optimizer which attempts to have a normalized convergence rate in all parameters. Practical stability results are given for this RMSprop ESC (RMSpESC). In particular notability, the proof of practical stability uses Lyapunov function based on observed contracting, attractive sets. Versions of this Lyapunov function could be applied to other areas of applications, in particular for interconnected systems. 
\end{abstract}

\section{Introduction}

Extremum Seeking Control (ESC) is a family of model-free optimization algorithms in continuous time, whose state dynamics are defined by differential equations. These algorithms have been used in a variety of application areas such as variable cam timing engine operation \cite{ref:popovic-2006}, ABS braking \cite{ref:dincmen-2012}, and speed control on sailing yachts \cite{ref:xiao-2012}. ESCs are fundamentally based on discrete optimization algorithms, where parameters are updated to minimize a cost function. The most basic ESC is the Gradient-based Extremum Seeking Control (GESC) which attempts to mimic a gradient descent algorithm. This baseline approach has local convergence rates towards extremums that are dependent on the eigenvalues of an unknown Hessian of the cost function at the extrema. These eigenvalues could be arbitrarily small or large which may lead for the GESC to be impractical to implement in real world scenarios.
 
Another ESC is the Newton-based ESC (NESC) \cite{ref:ghaffari-2012} which is based on Newton algorithms and attempts to correct for the unknown Hessian by estimating its inverse. This can lead to, in specific circumstances, assignable convergence rates. However, the algorithms require that the Hessian be invertible for convergence. This additional requirement and the need to estimate more derivatives of the cost function tend to complicate their implementation into real-world scenarios. Instead of the Newton algorithms, one can use some existing adaptive algorithms such as the RMSprop \cite{ref:ma-2022}. Adaptive algorithms seek to alter the standard gradient descent methods for some purpose, where the RMSprop method seeks to have a uniform convergence speed in all parameter directions by scaling each gradient component by an estimate of the inverse gradient component magnitude. This uniform convergence speed is an attractive property to try and achieve with ESCs, thus we propose a novel RMSprop ESC (RMSpESC).
	
Main Contribution: This work 1) proves the semiglobal practical stability properties of an Extremum Seeking Control system based on the RMSprop optimization algorithm and 2) provides a potential Lyapunov function method for some interconnected systems by using contracting attractive sets.

Notation:
	$\realnumbers$ indicates the real numbers while $\positiverealnumbers$ are strictly non-negative real numbers. A vector $x\in\realnumbers^n$ is a vector with $n$ real numbers. The set $\continuous^n$ is the set of continuous functions which are at least $n$ times differentiable. The ball of the origin with radius $r$ is denoted as $\ball{r}$. The norm $\Vert\cdot\Vert$ is the standard Euclidian norm while $\Vert x \Vert_{\set{Y}} = \inf_{y\in\set{Y}}\Vert x - y \Vert$ is norm of a point from a set.
	
\section{Problem Statement}	

The general problem statement of extremum seeking control acting directly on maps is to practically minimize a sensor output $\sensoroutput$ using model-free methods. The sensor output is generally considered as $\sensoroutput = \costfunction(\parameter)$ where the cost function $\costfunction:\realnumbers^n\to\realnumbers$ depends on the parameter vector $\parameter\in\realnumbers^n$ and is minimized by some optimal parameter $\optimal{\parameter}$. This work will use the standard additive sinusoidal dither signals such as those in \cite{ref:ghaffari-2012} to decompose the parameter into $\parameter~=~\estimate{\parameter}~+~\dithersignal(t)$ where $\estimate{\parameter}$ is the estimate of $\optimal{\parameter}$ and $\dithersignal$ is the dither signal. The perturbation dither signal is defined as a vector whose elements are $\dithersignal_i(t) = a_i \sin(\omega r_i t)$ where $r_i\in\integernumbers_{\neq 0}$ is a non-zero integer and the dither amplitude $a_i\neq 0$. This is done to have $\dithersignal(t)$ be a periodic function with a period $T = 2\pi/\omega$ and the various dither signal rates are based on a relative integer rate $r_i$ to the common frequency $\omega$. As is the case with previous literature, $\omega$ ($\omega^{-1}$) is a large (small) parameter. Another aspect of the dither signals is that the vector of dither signal amplitude should be made arbitrarily small, but non-zero. For the sake of notation, we will denote $a_0 = \sqrt{\sum_{i=1}^n a_i^2}$ and treat $a_0$ as a small parameter so that the individual dither signal amplitudes have fixed ratios $a_i/a_0$ but the overall magnitude of $\dithersignal$ can be scaled by adjusting $a_0$.

The following assumptions are made about the cost function for this work:

\begin{asmp}
	\label{asmp:continuously-differentiable}
	The cost function $\costfunction:\realnumbers^n\to\realnumbers$ is continuously differentiable ($\costfunction\in\continuous^1$) on the domain $\parameter\in\realnumbers^n$.
\end{asmp}
\begin{asmp}
	\label{asmp:unique-minimum}
	The cost function $\costfunction$ has a unique minimum $\optimal{\parameter}$ such that $\forall\ \parameter\in\realnumbers^n$, $\costfunction(\optimal{\parameter}) < \costfunction(\parameter)$ if $\parameter\neq\optimal{\parameter}$.
\end{asmp}
\begin{asmp}
	\label{asmp:minimum-is-unique-stationary-point}
	The gradient vector $\nabla\costfunction(\parameter) = 0$ if and only if $\parameter = \optimal{\parameter}$.
\end{asmp}
\begin{asmp}
	\label{asmp:cost-function-radially-unbounded}
	The cost function $\costfunction$ is a radially unbounded function.
\end{asmp}
	
\section{RMSprop Extremum Seeking Control}

The continuous time differential equations defining RMSprop can be found in \cite{ref:ma-2022} but the ESC version of the RMSprop simply substitutes the perturbation-based estimates in place of the true values. For a parameter space of dimension $n$, the RMSpESC is defined variable-wise by the $2n+1$ equations
\begin{align}
	\label{eq:rmsprop:parameter-ode}
	\derivative{t} \estimate{\parameter}_i &{}={} -\gain\frac{\estimate{\gradient}_i\left(t,\estimate{\parameter}, \washoutfilterstate\right)}{\sqrt{\estimate{\gradientmagnitudesquared}_i} + \epsilon} \\
	\label{eq:rmsprop:squared-gradient-estimate-ode}
	\derivative{t} \estimate{\gradientmagnitudesquared}_i &{}={} \omega_{l,i} \left(\estimate{\gradient}_i(t,\estimate{\parameter}, \washoutfilterstate)^2 - \estimate{\gradientmagnitudesquared}_i\right) \\
	\label{eq:rmsprop:washoutfilter}
	\derivative{t} \washoutfilterstate &{}={} = \omega_{\washoutfilterstate}\left(\sensoroutput - \washoutfilterstate \right)
\end{align}
for every $i\in\lbrace 1,\ 2,\ \ldots, n\rbrace$. Here $\estimate{g}_i$ is the $i$th element of the gradient estimate $\estimate{\gradient}$, $\estimate{\gradientmagnitudesquared}_i$ is the low pass filter estimate of $\estimate{g}_i^2$, and $\washoutfilterstate$ is a washout filter state. The $\estimate{g}_i$ is formed using the  output of the washout filter and is defined as
\begin{equation}
	\estimate{g}_i\left(t, \estimate{\parameter}, \washoutfilterstate\right) = \gradientfilter_i(t)\left(\costfunction\left(\estimate{\parameter} + \dithersignal(t)\right) - \washoutfilterstate\right)
\end{equation}
where $\gradientfilter_i$ is a sinusoidal signal defined as $\gradientfilter_i(t) = \frac{2}{a_i}\sin(\omega r_i t)$.

We need to know the corresponding average system for the RMSpESC in order to prove the semiglobal practical uniform asymptotitic stability (sGPUAS) of the RMSpESC, where
practical stability is defined in other previous works \cite{ref:mcnamee-2024, ref:durr-2013}. We later prove that this average system is autonomous and globally uniformly asymptotically stable (GUAS). The corresponding average system is
\begin{align}
	\label{eq:rmsprop:average-system:paramete-ode}
	\derivative{t} \avgest{\parameter}_i &{}={} -\gain\frac{\avgest{g}_i\left(\avgest{\parameter}\right)}{\sqrt{\avgest{\gradientmagnitudesquared}_i} + \epsilon} \\
	\label{eq:rmsprop:average-system:squared-gradient-estimate-ode}
	\derivative{t} \avgest{\gradientmagnitudesquared}_i &{}={} \omega_{l,i} \left(\avgges\left(\avgest{\parameter}, \average{\washoutfilterstate}\right) - \avgest{\gradientmagnitudesquared}_i\right) \\
	\label{eq:rmsprop:average-system:washoutfilter}
	\derivative{t} \average{\washoutfilterstate} &{}={} \omega_{\washoutfilterstate} \left(\average{\costfunction}\left(\avgest{\parameter}\right) - \average{\washoutfilterstate}\right)
\end{align}
where
\begin{align}
	\label{eq:adaptive:rmsprop:average-costfunction}
	\average{\costfunction}\left(\avgest{\parameter}\right) &{}={} \frac{1}{T} \int_{t}^{t+T} \costfunction\left(\avgest{\parameter} + \dithersignal(t+\tau) \right) d\tau \\
	\label{eq:adaptive:rmsprop:average-gradient}
	\avgest{\gradient}_i\left(\avgest{\parameter}\right) &{}={} \frac{1}{T} \int_{t}^{t+T} \estimate{\gradient}_i\left(t+\tau, \avgest{\parameter}, \cdot \right) d\tau \\
	\label{eq:adaptive:rmsprop:average-gradient-squared-magnitude}
	\avgges\left(\avgest{\parameter}, \average{\washoutfilterstate}\right) &{}={} \frac{1}{T} \int_{t}^{t+T} \estimate{\gradient}_i\left(t+\tau, \avgest{\parameter}, \average{\washoutfilterstate}\right)^2 d\tau
\end{align}
An important feature of the average system is that $\avgest{\gradient}$ is independent of $\average{\washoutfilterstate}$ since $\int_{t}^{t+T} \gradientfilter(t+\tau)\average{\washoutfilterstate}d\tau = 0$. Other notable properties of the average system  is the convergence of $\avgest{\gradient}\to\nabla\costfunction$ as $a_0\to 0$ and the functions $\average{\costfunction},\avgges\in\continuous^1$, as proven in Appendix \ref{app:average-estimates}.

The equilibrium of \eqref{eq:rmsprop:average-system:paramete-ode} is $\stationary{\parameter}$ which then implies that the equilibrium for the system defined by \eqref{eq:rmsprop:average-system:paramete-ode}-\eqref{eq:rmsprop:average-system:washoutfilter} is $(\stationary{\parameter},\stationary{\gradientmagnitudesquared},\stationary{\washoutfilterstate})\in\realnumbers^n\times\positiverealnumbers^n\times\realnumbers$ where the equilibrium components $\stationary{\washoutfilterstate}$ and $\stationary[i,]{\gradientmagnitudesquared}$ are $\stationary{\washoutfilterstate} = \average{\costfunction}\left(\stationary{\parameter}\right)$ and $\stationary[i,]{\gradientmagnitudesquared} = \avgges\left(\stationary{\parameter},\stationary{\washoutfilterstate}\right)$. While the domain as stated is not unbounded, one could modify \eqref{eq:rmsprop:parameter-ode} to take the absolute value of $\estimate{\gradientmagnitudesquared}_i$ inside the square root and there would be no issues using theorems requiring unbounded domains. This is not done in this work to stay consistent with previous literature of the RMSprop such as \cite{ref:ma-2022}. In the limit as $a_0\to 0$, $\average{\costfunction}\to\costfunction$ by Assumption \ref{asmp:continuously-differentiable} (and hence $\stationary{\washoutfilterstate}\to \costfunction\left(\optimal{\parameter}\right)$) and $\stationary[i,]{\gradientmagnitudesquared}\to 0$ by Corollary \ref{col:average-estimate:gradient-squared-magnitude-converges}. The equilibrium is then $\left(\stationary{\parameter},0,\costfunction\left(\stationary{\parameter}\right)\right)$ when considering both $a_0$ and $\omega^{-1}$ as small parameters.

Having the equilibrium and the necessary differential equations, we give the main theorem of this work.
	
\begin{thm}[RMSpropESC is sGPUAS]
	\label{thm:sgpuas}
	
	For a cost function $\costfunction:\realnumbers^n \to \realnumbers$ satisfying the Assumptions \ref{asmp:continuously-differentiable}-\ref{asmp:cost-function-radially-unbounded}, the RMSpESC defined by Eqs~\eqref{eq:rmsprop:parameter-ode}-\eqref{eq:rmsprop:washoutfilter} is sGPUAS to the point $\left(\stationary{\parameter},0,\costfunction\left(\stationary{\parameter}\right)\right)$ with the small parameter vector $\left(a_0,\omega^{-1}\right)$.
\end{thm}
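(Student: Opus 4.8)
The plan is to follow the standard two-scale averaging route for extremum seeking. \emph{Step one} establishes that the autonomous average system \eqref{eq:rmsprop:average-system:paramete-ode}--\eqref{eq:rmsprop:average-system:washoutfilter} is GUAS to its equilibrium $(\stationary{\parameter},\stationary{\gradientmagnitudesquared},\stationary{\washoutfilterstate})$; \emph{step two} invokes an averaging theorem of the type in \cite{ref:durr-2013,ref:mcnamee-2024} that lifts GUAS of the autonomous average system to sGPUAS of the original $T$-periodic system \eqref{eq:rmsprop:parameter-ode}--\eqref{eq:rmsprop:washoutfilter}, with $\omega^{-1}$ as the averaging small parameter. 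The second small parameter $a_0$ enters through the equilibrium: as $a_0\to 0$ one has $\avgest{\gradient}\to\nabla\costfunction$, $\average{\costfunction}\to\costfunction$, and $\stationary[i,]{\gradientmagnitudesquared}\to 0$ (Corollary \ref{col:average-estimate:gradient-squared-magnitude-converges}), so the target of the practical estimate collapses onto $(\stationary{\parameter},0,\costfunction(\stationary{\parameter}))$, and the residual $a_0$-dependence of $(\stationary{\gradientmagnitudesquared},\stationary{\washoutfilterstate})$ is absorbed into the practical neighborhood, yielding the stated small-parameter vector $(a_0,\omega^{-1})$.

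The analytical core is therefore the GUAS claim for the average system, which I would attack by exploiting its cascade-like coupling. Because $\int_{t}^{t+T}\gradientfilter(t+\tau)\average{\washoutfilterstate}\,d\tau = 0$, the averaged gradient $\avgest{\gradient}_i$ depends on $\avgest{\parameter}$ alone, so the $\avgest{\parameter}$-flow is a \emph{normalized} gradient descent whose search direction is insensitive to the filter states, since each scaling $1/(\sqrt{\avgest{\gradientmagnitudesquared}_i}+\epsilon)$ lies in $(0,1/\epsilon]$ on the positively invariant set $\avgest{\gradientmagnitudesquared}_i\geq 0$. Meanwhile $\average{\washoutfilterstate}$ and each $\avgest{\gradientmagnitudesquared}_i$ are outputs of stable first-order filters (rates $\omega_{\washoutfilterstate},\omega_{l,i}>0$) tracking the $\avgest{\parameter}$-parametrized quasi-steady-state maps $\washoutfilterstate_{ss}(\avgest{\parameter})=\average{\costfunction}(\avgest{\parameter})$ and $\gradientmagnitudesquared_{i,ss}(\avgest{\parameter})=\avgges(\avgest{\parameter},\washoutfilterstate_{ss}(\avgest{\parameter}))$. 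I would take the graph of these maps as the contracting attractive set advertised in the abstract and build the composite Lyapunov function
\begin{equation*}
	\lyapunov = \left(\average{\costfunction}(\avgest{\parameter}) - \average{\costfunction}(\stationary{\parameter})\right) + \tfrac{1}{2}\sum_{i=1}^{n} c_i\left(\avgest{\gradientmagnitudesquared}_i - \gradientmagnitudesquared_{i,ss}(\avgest{\parameter})\right)^2 + \tfrac{1}{2}c_{\washoutfilterstate}\left(\average{\washoutfilterstate} - \washoutfilterstate_{ss}(\avgest{\parameter})\right)^2 ,
\end{equation*}
with weights $c_i,c_{\washoutfilterstate}>0$ to be fixed. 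The first term is positive definite and radially unbounded by Assumptions \ref{asmp:unique-minimum}, \ref{asmp:minimum-is-unique-stationary-point}, and \ref{asmp:cost-function-radially-unbounded} (inherited by $\average{\costfunction}$ for small $a_0$), while the remaining terms measure squared distance to the attractive set.

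Differentiating, the cost term contributes $-\gain\sum_i (\partial_i\average{\costfunction})\,\avgest{\gradient}_i/(\sqrt{\avgest{\gradientmagnitudesquared}_i}+\epsilon)$, which is negative away from $\stationary{\parameter}$ because $\avgest{\gradient}\to\nabla\costfunction$ renders it a positive-normalization-weighted form of $\Vert\nabla\average{\costfunction}\Vert^2$ that vanishes only at the unique stationary point (Assumption \ref{asmp:minimum-is-unique-stationary-point}), while each filter-error term supplies a dominant negative $-\omega(\cdot)(\mathrm{error})^2$ from its stable filter dynamics. \textbf{The main obstacle is the family of cross terms} generated by the motion of the attractive set itself, namely the Jacobians $\partial\gradientmagnitudesquared_{i,ss}/\partial\avgest{\parameter}$ and $\partial\washoutfilterstate_{ss}/\partial\avgest{\parameter}$ multiplied by $\dot{\avgest{\parameter}}$, which inject the slow $\avgest{\parameter}$-motion into the fast error dynamics. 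I expect to dominate them by a Young's-inequality/weighted-gain argument, choosing $c_i,c_{\washoutfilterstate}$ and bounding the $\continuous^1$ maps $\average{\costfunction},\avgges$ (regularity established in Appendix \ref{app:average-estimates}) so that the negative descent and filter terms prevail and $\dot{\lyapunov}$ is negative definite on $\realnumbers^n\times\positiverealnumbers^n\times\realnumbers$. Standard $\classKinfty$ bounds then yield a $\classKL$ estimate and hence GUAS, completing step one; the averaging theorem of step two finishes the proof.
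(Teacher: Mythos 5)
Your step two and the overall architecture (prove GUAS of the autonomous average system, then invoke an averaging theorem, with the $a_0$-dependence of the equilibrium absorbed into the practical neighborhood) coincide with the paper. Step one does not. The paper never forms a quadratic penalty on the distance to the quasi-steady-state graph; it takes $\lyapunov = \lyapunov_{\parameter} + \max\lbrace r_{\washoutfilterstate}(\lyapunov_{\parameter}), \vert\avgerror{\washoutfilterstate}\vert\rbrace + \sum_i \max\lbrace r_{\gradientmagnitudesquared_i}(\cdot), \vert\avgerror{\gradientmagnitudesquared}_i\vert\rbrace$, where $r_{\washoutfilterstate}$ and $r_{\gradientmagnitudesquared_i}$ are radii of bounding balls for the images of $\avgerror{\costfunction}$ and $\error{\avgges}$ over the compact, connected, forward-invariant sublevel sets of $\lyapunov_{\parameter}$ (Lemma \ref{lem:global-results:lyapunov-function:parameter:level-set-properties}), differentiated via Dini derivatives and KKT sensitivity. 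The entire point of that max-type construction is to eliminate the cross terms you yourself identify as the main obstacle: outside its bounding ball a filter error decreases by its own stable first-order dynamics irrespective of how fast $\avgest{\parameter}$ moves, and inside the ball only the monotone shrinkage of the radius matters.

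This matters because the obstacle is not surmountable by the fix you propose. A Young's-inequality argument with \emph{fixed} weights $c_i, c_{\washoutfilterstate}$ needs the negative quadratic terms to dominate the squares of the cross coefficients globally, and under Assumptions \ref{asmp:continuously-differentiable}--\ref{asmp:cost-function-radially-unbounded} (only $\continuous^1$ and radial unboundedness, no global Lipschitz or growth bounds) those coefficients are unbounded. Already in the paper's scalar quadratic example, $\partial \gradientmagnitudesquared_{ss}/\partial\avgest{\parameter}$ grows like $\hessian^2\avgest{\parameter}$ and $\frac{d}{dt}\avgest{\parameter}$ can grow like $\gain\hessian\avgest{\parameter}/\epsilon$ off the manifold, so after the Young split the descent term must absorb a quantity of order $\avgest{\parameter}^4$ while it itself is at most of order $\avgest{\parameter}^2$; no constant weights make $\dot{\lyapunov}$ negative definite on all of $\realnumbers^n\times\positivereals^n\times\realnumbers$. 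Your route can at best give negative definiteness on compact sets with set-dependent weights, and converting that into GUAS requires first proving boundedness of every trajectory (the paper does this separately, via forward invariance of the sublevel sets of $\lyapunov_{\parameter}$ for small $a_0$ and an explicit variation-of-constants bound for the filters, Remark \ref{rem:global-results:filters-stable}) followed by a compact-exhaustion argument; neither appears in your proposal. A secondary gap: the descent term couples $\nabla\average{\costfunction}$ with $\avgest{\gradient}\neq\nabla\average{\costfunction}$, both only $a_0$-close to $\nabla\costfunction$, so its sign-definiteness needs the explicit slack estimate the paper makes (the $\eta_i$ bound in \eqref{eq:global-results:lyapunov-function:parameter-derivative}) before passing to the nominal $a_0\to 0$ system.
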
	

\section{Quadratic Functions}

To illustrate the usefulness of the RMSpESC, we first look at the local stability properties of the system. Consider the scalar cost function
\begin{equation}
	\label{eq:quadratic-example:scalar-costfunction}
	\costfunction(\parameter) = \optimal{\costfunction} + \frac{1}{2}\hessian \parameter^2,\quad \hessian > 0 \, .
\end{equation}
When we perturb $\parameter$ and "freeze" the variables $\estimate{\parameter}$, $\estimate{\gradientmagnitudesquared}$, and $\washoutfilterstate$ for the averaging process, we find that $\costfunction$ asymptotically approaches a periodic signal
\begin{align}
	\label{eq:quadratic-example:periodic-series}
	\costfunction(t, \estimate{\parameter}) &{}={} \costfunction_0 + \frac{1}{2}\hessian \left(\estimate{\parameter}^2 + 2a\estimate{\parameter}\sin(\omega t) + a^2 \sin^2(\omega t)\right)
\end{align}
This is better written in the form of a Fourier series
\begin{align}
	\costfunction(t, \estimate{\parameter}) &{}={} b_0(\estimate{\parameter}) + b_1(\estimate{\parameter})\sin(\omega t) + b_2\cos(2\omega t)
\end{align}
where the coefficients are
\begin{align}
	b_0(\estimate{\parameter}) &= \left(\optimal{\costfunction} + \frac{1}{2}\hessian\estimate{\parameter}^2  + \frac{a^2}{4}\hessian \right) \\
	b_1(\estimate{\parameter}) &= a\hessian\estimate{\parameter}\\
	b_2 &= -\frac{a^2}{4} \hessian
\end{align}
This conveniently simplifies the calculation of the average gradient and average squared magnitude of the gradient.
\begin{align}
	\label{eq:quadratic-example:scalar-example:gradient-result}
	\average{\gradient}\left(\avgest{\parameter}\right) &{}={} \frac{1}{a} b_1\left(\avgest{\parameter}\right) = \hessian\avgest{\parameter}
\end{align}
\begin{multline}
	\label{eq:quadratic-example:scalar-example:squared-gradient-magnitude-result}
	\avgges\left(\avgest{\parameter},\average{\washoutfilterstate}\right) {}={} \frac{4}{a^2}\left[\frac{1}{2}\left(b_0\left(\avgest{\parameter}\right) - \frac{1}{2}b_2 - \washoutfilterstate\right)^2 \right. \\ \left. + \frac{3}{2}\left(\frac{1}{2}b_1\left(\avgest{\parameter}\right)\right)^2 + \frac{1}{2}\left(\frac{1}{2}b_2\right)^2 \right]
\end{multline}
Note that the washout filter does not influence the average gradient but only has influence on the average squared gradient magnitude. However, it is revealed the importance of the washout filter for the RMSpESC, as the filter prevents the local convergence rate from depending on the extremum of $\costfunction$. Without the filter, larger magnitudes of the extremum would slow down the convergence rate.

\vspace*{1pt}
The equilibrium point for $\avgest{\parameter}$ can be easily seen to be $\stationary{\parameter} = 0$ by examination of $\avgest{\parameter}$ in \eqref{eq:quadratic-example:scalar-example:gradient-result}. Therefore the equilibrium for $\average{\washoutfilterstate}$ and $\avgest{\gradientmagnitudesquared}$ are $\stationary{\washoutfilterstate} = b_0(0)$ and $\stationary{\gradientmagnitudesquared} = b_2^2 /a^2 = a^2 \hessian^2 /16$. Changing to the error variables $\avgerror{\parameter} = \avgest{\parameter} - \stationary{\parameter}$, $\avgerror{\washoutfilterstate} = \average{\washoutfilterstate} - \stationary{\washoutfilterstate}$, and $\avgerror{\gradientmagnitudesquared} = \avgest{\gradientmagnitudesquared} - \stationary{\gradientmagnitudesquared}$, the new dynamics in this error coordinate system is
\begin{align}
	\derivative{t} \avgerror{\parameter}_i &{}={} -\gain\frac{\avgest{\gradient}_i\left(\stationary{\parameter} + \avgerror{\parameter}\right)}{\sqrt{\stationary[i,]{\gradientmagnitudesquared} + \avgerror{\gradientmagnitudesquared}_i} + \varepsilon}\\
	\derivative{t} \avgerror{\gradientmagnitudesquared}_i &{}={} \omega_{l,i}\left(\error{\avgges}\left(\avgerror{\parameter}, \avgerror{\washoutfilterstate}\right) - \avgerror{\gradientmagnitudesquared}_i\right) \\
	\derivative{t} \avgerror{\washoutfilterstate} &{}={} \omega_{\washoutfilterstate} \left( \avgerror{\costfunction}\left(\avgerror{\parameter}\right) - \avgerror{\washoutfilterstate}\right)
\end{align}
where
\begin{align}
	\avgest{\gradient}\left(\stationary{\parameter} + \avgerror{\parameter}\right) &{}={} \hessian\avgerror{\parameter} \\
	\avgerror{\costfunction}\left(\avgerror{\parameter}\right) &{}={} \average{\costfunction}\left(\stationary{\parameter} + \avgerror{\parameter}\right) - \average{\costfunction}\left(\stationary{\parameter}\right) \\
	\error{\avgges}\left(\avgerror{\parameter}, \avgerror{\washoutfilterstate}\right) &{}={} \avgges\left(\stationary{\parameter} + \avgerror{\parameter}, \stationary{\washoutfilterstate} + \avgerror{\washoutfilterstate}\right) - \avgges\left(\stationary{\parameter}, \stationary{\washoutfilterstate}\right)
\end{align}

Dropping the unnecessary $i$ subscript and calculating the linearization of the system about the origin gives
\begin{equation}
	\label{eq:quadratic-example:scalar-example:linearization}
	\derivative{t} \begin{bmatrix}
		\avgerror{\parameter} \\ \avgerror{\washoutfilterstate} \\\avgerror{\gradientmagnitudesquared}
	\end{bmatrix} = \begin{bmatrix}
		\frac{-\gain\hessian}{\frac{1}{4}\vert a \vert\hessian + \varepsilon} & 0 & 0 \\
		0 & -\omega_{\washoutfilterstate} & 0 \\
		0 & -\frac{1}{2}\omega_{l}\hessian & -\omega_{l}
	\end{bmatrix} \begin{bmatrix}
		\avgerror{\parameter} \\ \avgerror{\washoutfilterstate} \\\avgerror{\gradientmagnitudesquared}
	\end{bmatrix}
\end{equation}
This indicates that the Jacobian is Hurwitz as the matrix in \eqref{eq:quadratic-example:scalar-example:linearization} is a lower triangular matrix with strictly negative terms on the main diagonal. Thus the RMSpESC for the quadratic case is exponentially practically stable by \cite[Th~10.4]{ref:khalil-2002}. Due to the dependence of $\avgges$ on $\average{\washoutfilterstate}$, the matrix is not strictly diagonal. 

The eigenvalues of the Jacobian in \eqref{eq:quadratic-example:scalar-example:linearization} are the diagonal elements. Of the three, two are directly assignable by means of the filter gains. The last eigenvalue, which corresponds to the first element of the Jacobian, depends on an unknown $\hessian$. Although the system designer does have a choice of $\varepsilon$ and $a$ which influence the behavior of the eigenvalue depending on $\hessian$. When the curvature of $\costfunction$ becomes steep ($\hessian\to \infty$), unknown eigenvalue approaches a lower limit of $-4\gain/\vert a \vert$. In the other extreme when the curvature of $\costfunction$ becomes shallow ($\hessian\to 0$), the eigenvalue asymptotically approaches $-(\gain/\varepsilon)\hessian$. This allows for some correction to convergence rate over a traditional GESC since $\varepsilon$ is chosen as a small factor.

Given the behavior of first eigenvalue in the limits of $\hessian$, one can see that the RMSpESC is somewhat between the GESC and NESC algorithms. It still shows a convergence rate dependent on $\hessian$ when $\hessian \ll 1$ like the GESC but a convergence rate that is almost independent of $\hessian$ like the NESC when $\hessian \gg 1$.
	
\section{Semiglobal Practical Stability}

For semiglobal practical stability, we analyze the average system since an autonomous average system which is GUAS to the origin implies that the original system is sGPUAS \cite{ref:teel-1998}. However, this still leads us with the task of showing that the average system is GUAS. We will use Lyapunov stability but still need to construct a suitable Lyapunov function which is not in general an easy task, although we wish to use the following properties outlined in Lemma \ref{lem:global-results:lyapunov-function:parameter:level-set-properties}.

\begin{lem}
	\label{lem:global-results:lyapunov-function:parameter:level-set-properties}
	For RMSpESC acting on a cost function $\costfunction$ that satisfies Assumptions \ref{asmp:continuously-differentiable}-\ref{asmp:cost-function-radially-unbounded}, the level sets of
	\begin{equation}
		\label{eq:global-results:lyapunov-function:parameter}
		\lyapunov_{\parameter}\left(\avgerror{\parameter}\right) {}={} \costfunction\left(\avgerror{\parameter} + \stationary{\parameter}\right) - \costfunction\left(\stationary{\parameter}\right)
	\end{equation}
	are compact and connected. Additionally any non-empty level set of $\lyapunov_{\parameter}$ which is not a singleton set can be made forward invariant using a sufficiently small $a_0$.
\end{lem}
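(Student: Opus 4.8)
The plan is to establish the two claims separately, exploiting the structure of $\lyapunov_{\parameter}$ as a shifted copy of the cost function. The function $\lyapunov_{\parameter}(\avgerror{\parameter}) = \costfunction(\avgerror{\parameter} + \stationary{\parameter}) - \costfunction(\stationary{\parameter})$ is simply $\costfunction$ recentered at its minimizer, so its level sets inherit the topology of the level sets of $\costfunction$. First I would show that $\lyapunov_{\parameter}$ is itself a valid candidate: by Assumption~\ref{asmp:unique-minimum} it attains a strict global minimum of $0$ at $\avgerror{\parameter}=0$, and by Assumption~\ref{asmp:cost-function-radially-unbounded} it is radially unbounded. These two facts are exactly what is needed for the compactness claim, since a radially unbounded continuous function has level sets $\{\avgerror{\parameter} : \lyapunov_{\parameter}(\avgerror{\parameter}) \le c\}$ that are closed (by continuity, Assumption~\ref{asmp:continuously-differentiable}) and bounded (by radial unboundedness), hence compact by Heine--Borel.

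For connectedness I would argue that each sublevel set is connected because $\optimal{\parameter}$ is the unique stationary point. Assumption~\ref{asmp:minimum-is-unique-stationary-point} guarantees $\nabla\costfunction$ vanishes only at $\optimal{\parameter}$, which precludes $\lyapunov_{\parameter}$ from having any additional local minima, saddles, or isolated components away from the origin. The cleanest route is to show that every sublevel set is star-shaped with respect to the origin, or equivalently that along any gradient-descent flow trajectory of $\lyapunov_{\parameter}$ the value decreases monotonically to $0$ and the trajectory reaches the origin; since every point flows continuously to the unique minimizer without getting stuck at a spurious critical point, each sublevel set deformation-retracts to $\{0\}$ and is therefore connected. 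I would phrase this via the monotone decrease of $\lyapunov_{\parameter}$ along $-\nabla\lyapunov_{\parameter}$ together with uniqueness of the critical point to rule out disconnected pieces.

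The forward-invariance claim is the more delicate part and I expect it to be the main obstacle. A non-singleton level set $\mathcal{S}_c = \{\lyapunov_{\parameter} = c\}$ with $c>0$ is forward invariant for the averaged parameter dynamics \eqref{eq:rmsprop:average-system:paramete-ode} precisely when the vector field is tangent to $\mathcal{S}_c$, i.e.\ when $\derivative{t}\lyapunov_{\parameter} = \nabla\costfunction(\avgest{\parameter})^{\!\top}\,\derivative{t}\avgest{\parameter}$ vanishes on $\mathcal{S}_c$. Substituting the averaged dynamics, this derivative is $-\gain\sum_i \nabla_i\costfunction(\avgest{\parameter})\,\avgest{\gradient}_i(\avgest{\parameter}) / (\sqrt{\avgest{\gradientmagnitudesquared}_i}+\epsilon)$, so the key step is to control the mismatch between the true gradient $\nabla\costfunction$ and its averaged estimate $\avgest{\gradient}$. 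Here I would invoke the stated property that $\avgest{\gradient}\to\nabla\costfunction$ as $a_0\to 0$ (established in Appendix~\ref{app:average-estimates}): on the compact level set the convergence is uniform, so for sufficiently small $a_0$ the estimate $\avgest{\gradient}$ is close enough to $\nabla\costfunction$ that the averaged field points inward (strictly decreasing $\lyapunov_{\parameter}$) everywhere on $\mathcal{S}_c$ except possibly on a thin boundary layer. The hard part is making ``forward invariant'' precise: the level set itself is generally \emph{not} exactly invariant, so I would instead show that the compact sublevel set $\{\lyapunov_{\parameter}\le c\}$ is forward invariant by verifying that $\derivative{t}\lyapunov_{\parameter}\le 0$ on its boundary $\mathcal{S}_c$. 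Using compactness to extract a uniform positive lower bound on $\Vert\nabla\costfunction\Vert$ away from the origin and the uniform $a_0$-smallness of $\Vert\avgest{\gradient}-\nabla\costfunction\Vert$, I would conclude that $\nabla\costfunction^{\!\top}\avgest{\gradient} > 0$ on $\mathcal{S}_c$, making the derivative strictly negative there and the sublevel set forward invariant for all sufficiently small $a_0$.
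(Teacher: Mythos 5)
Your handling of compactness and forward invariance is essentially the paper's own argument: compactness from continuity plus radial unboundedness, and forward invariance obtained by showing $\derivative{t}\lyapunov_{\parameter}\leq 0$ on the boundary of the sublevel set, using uniform convergence of $\avgest{\gradient}\to\nabla\costfunction$ on the compact set together with a positive lower bound on $\Vert\nabla\costfunction\Vert$ away from $\stationary{\parameter}$. Your explicit reinterpretation of ``forward invariant level set'' as forward invariance of the corresponding sublevel set is the correct reading of what the paper actually establishes. Where you genuinely diverge is connectedness: you propose retracting each sublevel set onto the origin along the negative gradient flow, whereas the paper argues by contradiction that a disconnected sublevel set would contain a compact component not containing the minimizer, on which $\costfunction$ attains a local minimum, producing a second stationary point in violation of Assumption~\ref{asmp:minimum-is-unique-stationary-point}. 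The paper's route needs only the extreme value theorem on a compact component; yours would yield the stronger conclusion of contractibility, but at a cost you should acknowledge: under Assumption~\ref{asmp:continuously-differentiable} the gradient is merely continuous, so solutions of $\dot\phi=-\nabla\lyapunov_{\parameter}(\phi)$ exist but need not be unique, and the flow-based deformation retract is not automatically well defined. Also, star-shapedness is \emph{not} equivalent to (nor implied by) having a unique critical point --- a curved valley gives connected, non-star-shaped sublevel sets --- so that aside should be dropped.

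Two smaller points. First, in your closing inequality the averaged field weights component $i$ by $1/(\sqrt{\avgest{\gradientmagnitudesquared}_i}+\epsilon)$, so $\nabla\costfunction^{\top}\avgest{\gradient}>0$ alone does not force $\derivative{t}\lyapunov_{\parameter}<0$; you need the component-wise bound $\frac{\partial\costfunction}{\partial\parameter_i}\cdot\avgest{\gradient}_i\geq\left(\frac{\partial\costfunction}{\partial\parameter_i}\right)^2-\eta_i\left\vert\frac{\partial\costfunction}{\partial\parameter_i}\right\vert$ combined with the fact that each weight lies between $1/(\sqrt{\avgest{\gradientmagnitudesquared}_{\max}}+\epsilon)$ and $1/\epsilon$ on the set of filter states under consideration --- which is exactly the form \eqref{eq:global-results:lyapunov-function:parameter-derivative} takes in the paper. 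Second, your observation that the level set itself is generally not invariant (only the sublevel set is) is a fair criticism of the lemma's phrasing, and worth stating when you write this up.
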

\begin{proof}
	The compact property of the level sets of $\lyapunov_{\parameter}$ comes from 	Assumptions \ref{asmp:continuously-differentiable} and \ref{asmp:cost-function-radially-unbounded} since Assumption \ref{asmp:continuously-differentiable} ensures $\costfunction$ has no singularities and Assumption \ref{asmp:cost-function-radially-unbounded} ensures that there is no finite upper limit for any sequence $\left\lbrace \lyapunov_{\parameter}\left(\avgerror{\parameter}_k\right) \right\rbrace_{k=1}^{\infty}$ where $\left\Vert \avgerror{\parameter}_k \right\Vert \to\infty$ as $k\to\infty$. The forward invariance property is similarly easy to show by taking the derivative of \eqref{eq:global-results:lyapunov-function:parameter} to get
	\begin{align}
		\label{eq:global-results:lyapunov-function:parameter-derivative}
		\derivative{t}\lyapunov_{\parameter}\left(\avgerror{\parameter}\right) &{}\leq{} \sum_{i=1}^n -\gain\frac{\left(\frac{\partial\costfunction}{\partial\parameter_i}\left(\stationary{\parameter} + \avgerror{\parameter}\right)^2 - \eta_i\left\vert\frac{\partial\costfunction}{\partial\parameter_i}\left(\stationary{\parameter} + \avgerror{\parameter}\right)\right\vert\right)}{\sqrt{\avgest{\gradientmagnitudesquared}_i} + \varepsilon}
	\end{align}
	where $\eta_i > 0$ can all be made arbitrarily small by the convergence of $\avgest{\gradient}\to\nabla\costfunction$ from Lemma \ref{lem:average-estimate:gradient-estimate-converges}. Thus for any level set of $\lyapunov_{\parameter}$ and a set of considered $\avgest{\gradientmagnitudesquared}$, there is a sufficiently small $a_0$ such that the level set is forward invariant for all considered $\avgest{\gradientmagnitudesquared}$ by ensuring $\derivative{t}\lyapunov_{\parameter} < 0$.
	
	The last property of connectedness is the only property that requires a more in-depth proof which is done by contradiction. We start by assuming the negative that the level sets are not always connected. Then there exists $c_{\parameter}>0$ such that the level set is comprised of at least two disjoint, connected, compact sets. Specifically, $\exists c_{\parameter} > 0$ where one of these sets is a singleton set, containing only a point $\avgerror{\parameter}\neq 0$. The gradient at this point must be zero by Assumption \ref{asmp:continuously-differentiable} since it is a local minimum but that contradicts Assumption \ref{asmp:unique-minimum}. Therefore the level sets of $\lyapunov_{\parameter}$ must always be connected sets.
\end{proof}

In light of Lemma \ref{lem:global-results:lyapunov-function:parameter:level-set-properties}, we can also Remark \ref{rem:global-results:filters-stable}.

\begin{rem}
	\label{rem:global-results:filters-stable}
	Given the conditions of Lemma \ref{lem:global-results:lyapunov-function:parameter:level-set-properties}, the error states $\avgerror{\washoutfilterstate}$ and $\avgerror{\gradientmagnitudesquared}_i$ are bounded.
\end{rem}
\begin{proof}
	Since the dynamics of $\avgerror{\washoutfilterstate}$ is linear, we can write its solution explicitly as
	\begin{equation}
		\label{eq:global-results:lyapunov-funciton:parameter-derivative}
		\avgerror{\washoutfilterstate}(t) = e^{-\omega_{\washoutfilterstate} t} \avgerror{\washoutfilterstate}_0 + \int_{0}^{t} \omega_{\washoutfilterstate} e^{\omega_{\washoutfilterstate}(\tau - t)} \avgerror{\costfunction}\left(\avgerror{\parameter}(\tau)\right) d\tau
	\end{equation}
	where $\avgerror{\washoutfilterstate}_0 = \avgerror{\washoutfilterstate}(0)$. Since $\avgerror{\costfunction}$ is a continuous function by Remark \ref{rem:average-estimate:continuously-differentiable} and level sets of $\lyapunov_{\parameter}$ are compact and forward invariant by Lemma \ref{lem:global-results:lyapunov-function:parameter:level-set-properties}, then the images of the level sets of $\lyapunov_{\parameter}$ through $\avgerror{\costfunction}$ are also compact \cite[Prop~1.5.2]{ref:abraham-1988} and forward invariant. Hence we can bound $\left\vert \avgerror{\washoutfilterstate}(t) \right\vert$ by
	\begin{equation}
		\label{eq:global-results:washout-filter-stable}
		\left\vert \avgerror{\washoutfilterstate} (t)\right\vert \leq e^{-\omega_{\washoutfilterstate} t} \left\vert \avgerror{\washoutfilterstate}_0 \right\vert + \left(1 - e^{-\omega_{\washoutfilterstate}t} \right) r_{\washoutfilterstate} \leq \max\left\lbrace \left\vert\avgerror{\washoutfilterstate}_0\right\vert, r_{\washoutfilterstate} \right\rbrace
	\end{equation}
	where $r_{\washoutfilterstate}$ is the radius of a bounding ball for the level sets of $\lyapunov_{\parameter}$ through $\avgerror{\costfunction}$. Equation \eqref{eq:global-results:washout-filter-stable} shows that $\avgerror{\washoutfilterstate}$ is bounded. The steps to show that $\avgerror{\gradientmagnitudesquared}_i$ is bounded are the same as those for $\avgerror{\washoutfilterstate}$ except we use the fact that the image of the level sets of $\lyapunov_{\parameter}$ and a ball $\ball{\max\left\lbrace \left\vert\avgerror{\washoutfilterstate}_0\right\vert, r_{\washoutfilterstate} \right\rbrace}$ through $\avgges$ is compact and forward invariant.
\end{proof}

One insight that we can glean from Remark \ref{rem:global-results:filters-stable} is that $\lyapunov_{\parameter}$ is sGPUAS to $0$ with respect to the small parameter $a_0$ since $\avgest{\gradientmagnitudesquared}_i$ is bounded. However, as this work deals with practical stability so the nominal average system arrived in the limit as $a_0\to 0$ treats $\derivative{t}\lyapunov_{\parameter}$ as a negative definite function and consequently $\stationary{\parameter}$ as GUAS \cite[Th~4.9]{ref:khalil-2002}. Another is that as \eqref{eq:rmsprop:squared-gradient-estimate-ode} and \eqref{eq:rmsprop:washoutfilter} are low pass filters and therefore attracted to the images of $\avgerror{\costfunction}$ and $\avgges$. Combining these two insights gives the central idea for constructing a Lyapunov function to prove Theorem \ref{thm:sgpuas}, that {\em the error variables of the low pass filters are being attracted to bounding balls which are shrinking to zero as $t\to\infty$}.

\begin{proof}[{Proof of Theorem \ref{thm:sgpuas}}]

	The Lyapunov function in the error coordinate system is
	\begin{equation}
		\label{eq:global-results:lyapunov-function}
		\lyapunov\left(\avgerror{\parameter}, \avgerror{\washoutfilterstate}, \avgerror{\gradientmagnitudesquared}\right) = \lyapunov_{\parameter}\left(\avgerror{\parameter}\right) + \lyapunov_{\washoutfilterstate}\left(\avgerror{\washoutfilterstate}; \avgerror{\parameter}\right) + \sum_{i=1}^{n} \lyapunov_{\gradientmagnitudesquared_i}\left(\avgerror{\gradientmagnitudesquared}_i; \avgerror{\parameter}, \avgerror{\washoutfilterstate} \right)
	\end{equation}
	where $\lyapunov_{\parameter}$ was defined earlier in \eqref{eq:global-results:lyapunov-function:parameter} and
	\begin{align}
		\label{eq:global-results:lyapunov-function:washout-filter}
		\lyapunov_{\washoutfilterstate}\left(\avgerror{\washoutfilterstate}; \avgerror{\parameter}\right) &{}={} \max\left\lbrace r_{\washoutfilterstate}\left(\lyapunov_{\parameter}\left(\avgerror{\parameter}\right)\right), \left\vert \avgerror{\washoutfilterstate} \right\vert \right\rbrace \\
		\label{eq:global-results:lyapunov-function:alt-gradient-magnitude-squared}
		\lyapunov_{\gradientmagnitudesquared_i}\left(\avgerror{\gradientmagnitudesquared}_i; \avgerror{\parameter}, \avgerror{\washoutfilterstate}\right) &{}={} \max\left\lbrace r_{\gradientmagnitudesquared_i}\left(\lyapunov_{\parameter}\left(\avgerror{\parameter}\right), \lyapunov_{\washoutfilterstate}\left(\avgerror{\washoutfilterstate};\avgerror{\parameter}\right) \right), \left\vert \avgerror{\gradientmagnitudesquared}_i \right\vert \right\rbrace
	\end{align}
	The variables $r_{\washoutfilterstate}$ and $r_{\gradientmagnitudesquared}$ are radii of bounding balls for the images of the functions $\avgerror{\costfunction}$ and $\error{\avgges}$ along the forward trajectories. The formulation behind \eqref{eq:global-results:lyapunov-function} is to penalize: the suboptimality of $\costfunction\left(\avgest{\parameter}\right)$; the  minimum radii required for the bounding balls of the images of $\avgerror{\costfunction}$ and $\error{\avgges}$; and the distance that variables $\avgerror{\washoutfilterstate}$ and $\avgerror{\gradientmagnitudesquared}$ are from the bounding balls. Note that \eqref{eq:global-results:lyapunov-function:washout-filter} and \eqref{eq:global-results:lyapunov-function:alt-gradient-magnitude-squared} each include both the penalties for the size of the bounding balls and the error variables away from these balls using the max function as, for example, $\left\vert \avgerror{\washoutfilterstate} \right\vert = \left\Vert \avgerror{\washoutfilterstate} \right\Vert_{\ball{r_{\washoutfilterstate}}} + r_{\washoutfilterstate}$ whenever $\left\vert\avgerror{\washoutfilterstate}\right\vert > r_{\washoutfilterstate}$.
	
	The minimum radii of the bounding balls can be expressed as solutions to the optimization problems of 
	\begin{align}
		\label{eq:global-results:r-washout:max}
		r_{\washoutfilterstate}\left(c_{\parameter}\right) &= \max_{\phi\in\realnumbers^n} \left\vert \avgerror{\costfunction}_0(\phi) \right\vert \\
		\label{eq:global-results:r-washout:parameter-constraint}
		\subto &\ \lyapunov_{\parameter}\left(\phi\right) \leq c_{\parameter}
	\end{align}
	and
	\begin{align}
		\label{eq:global-results:r-ges:max}
		r_{\gradientmagnitudesquared_i}\left(c_{\parameter}, c_{\washoutfilterstate}\right) &= \max_{\phi\in\realnumbers^n,\eta\in\realnumbers} \left\vert \error{\avgges} (\phi,\eta) \right\vert \\
		\label{eq:global-results:r-ges:parameter-constraint}
		\subto &\ \lyapunov_{\parameter}\left(\phi\right) \leq c_{\parameter}\\
		\label{eq:global-results:r-ges:washout-constraint}
		&\ \lyapunov_{\washoutfilterstate}\left(\eta; \phi\right) \leq c_{\washoutfilterstate}
	\end{align}
	The connected, compact sublevel sets of $\lyapunov_{\parameter}$ and the differentiability of $\avgerror{\costfunction}$ and $\error{\avgges}$ with respect to $\parameter$ means that $r_{\washoutfilterstate}$ and $r_{\gradientmagnitudesquared_i}$ are all continuous, monotonically increasing functions. However,  $r_{\washoutfilterstate}$ and $r_{\gradientmagnitudesquared_i}$ are not expected to be continuously differentiable everywhere. Hence, a more general notion of derivatives are the Dini derivatives, particularly the upper right Dini derivative $D^+$ and the lower left Dini derivative $D_-$ \cite{ref:szarski-1965}, are used to differentiate the radii. The Dini deriviates are found by using first order sensitivity analysis of the Karush-Kuhn-Tucker (KKT) optimality conditions \cite[Ch~5.6]{ref:boyd-2004}. Starting with $r_{\washoutfilterstate}$, the Lagrangian of the standard minimization problem for \eqref{eq:global-results:r-washout:max} and \eqref{eq:global-results:r-washout:parameter-constraint} is
	\begin{equation}
		\lagrangian_{\washoutfilterstate}\left(\avgerror{\parameter},\lambda\right) = - \left\vert \avgerror{\costfunction}\left(\avgerror{\parameter}\right)\right\vert + \lambda\left(\lyapunov_{\parameter}\left(\avgerror{\parameter}\right) - c_{\parameter}\right)
	\end{equation}
	where $\lambda \geq 0$ is a Lagrange multiplier for the problem. The constraint sensitivity of the Lagrangian at optimal points of this function is $\frac{\partial\lagrangian}{\partial c_{\parameter}} = -\frac{\partial r_{\washoutfilterstate}}{\partial {c_{\parameter}}}  = -\lambda$. Thus we can write the upper right Dini derivatives of $r_{\washoutfilterstate}$ as 
	\begin{multline}
		\label{eq:global-results:r-washout-urdd}
		D^+ r_{\washoutfilterstate}\left(c_{\parameter}\right) = \max\left\lbrace \lambda\ \Big\vert\ \avgerror{\parameter}\in\realnumbers^n,\ \lambda\geq 0,\ \left\vert \avgerror{\costfunction}\left(\avgerror{\parameter}\right) \right\vert=r\left(c_{\parameter}\right), \right.\\\left. \lyapunov_{\parameter}\left(\avgerror{\parameter}\right)\leq c_{\parameter}, -\nabla\left\vert\avgerror{\costfunction}\left(\parameter\right)\right\vert +\lambda\nabla\lyapunov_{\parameter}\left(\avgerror{\parameter}\right) = 0 \right\rbrace
	\end{multline}
	for $c_{\parameter} > 0$. The lower left Dini derivative is the same as in \eqref{eq:global-results:r-washout-urdd} except it is a minimization rather than a maximization. The procedure to find the various Dini derivatives for $r_{\gradientmagnitudesquared_i}$ follows in nearly the same way. One of the upper right Dini derivatives are shown in \eqref{eq:global-results:r-ges-urdd:parameter} while the other is similar, maximizing the other Lagrange multiplier at the extremum points. The lower left Dini derivatives in those directions can be found by replacing the maximization with a minimization. As the direction in the given Dini derivatives does not appear in the variable of the set in  \eqref{eq:global-results:r-ges-urdd:parameter}, the Dini derivatives in any other direction $d = (d_{\parameter}, d_{\washoutfilterstate})$ satisfying $d_{\parameter},d_{\washoutfilterstate}\geq 0$ is simply a linear combination of the Dini derivatives given, e.g.,
	\begin{multline}
		D^{+}r_{\gradientmagnitudesquared_i}\left((c_{\parameter}, c_{\washoutfilterstate}), d\right) = \\ d_{\parameter} D^{+}r_{\gradientmagnitudesquared_i}\left((c_{\parameter}, c_{\washoutfilterstate}), \unitvector_{c_{\parameter}}\right) + d_{\washoutfilterstate} D^{+}r_{\gradientmagnitudesquared_i}\left((c_{\parameter}, c_{\washoutfilterstate}), \unitvector_{c_{\washoutfilterstate}}\right)
	\end{multline}
	
	\begin{figure*}[!t]
		\normalsize
		\newcounter{tempcounter}
		\setcounter{tempcounter}{\value{equation}}
		\setcounter{equation}{42}
		\begin{multline}
			\label{eq:global-results:r-ges-urdd:parameter}
			D^{+}r_{\gradientmagnitudesquared_i}\left((c_{\parameter}, c_{\washoutfilterstate}), \unitvector_{c_{\parameter}}\right) = \max\left\lbrace \lambda_{\parameter}\ \big\vert\ \phi\in\realnumbers^n,\ \eta\in\realnumbers,\ \lambda_{\parameter},\lambda_{\washoutfilterstate}\in\positiverealnumbers,\ \lyapunov_{\parameter}(\phi) \leq c_{\parameter},\ \lyapunov_{\washoutfilterstate}\left(\eta; \lyapunov_{\parameter}\left(\phi\right)\right) \leq c_{\washoutfilterstate}, \right.\\\left. \left\vert\error{\avgges}\left(\phi,\eta\right)\right\vert = r_{\gradientmagnitudesquared_i}(c_{\parameter}, c_{\washoutfilterstate}),\ -\nabla\left\vert\error{\avgges}\left(\phi,\eta\right)\right\vert + \lambda_{\parameter}\left(\lyapunov_{\parameter}\left(\phi\right) - c_{\parameter}\right) + \lambda_{\washoutfilterstate}\left(\lyapunov_{\washoutfilterstate}\left(\eta; \lyapunov_{\parameter}\left(\phi\right)\right) - c_{\washoutfilterstate}\right) =  0\right\rbrace
		\end{multline}
		\setcounter{equation}{\value{tempcounter}}
		\hrulefill
		\vspace*{4pt}
	\end{figure*}
	
	Having established Dini derivatives for bounding ball radii, we now find the Lie derivative of $\lyapunov$ in \eqref{eq:global-results:lyapunov-function}. We have already established that the component $\derivative{t}\lyapunov_{\parameter}$ is a negative definite function with respect to its argument in the nominal average system where $a_0\to 0$. The other components, $\lyapunov_{\washoutfilterstate}$ and $\lyapunov_{\gradientmagnitudesquared_i}$, have Lie derivatives of
	\begin{subnumcases}{\derivative[\lyapunov_{\washoutfilterstate}]{t} = }
		\label{eq:global-results:lyapunov-function:washout-derivative:a}
		\sgn\left(\avgerror{\washoutfilterstate}\right)\cdot\derivative[\avgerror{\washoutfilterstate}]{t} = -\left\vert \derivative[\avgerror{\washoutfilterstate}]{t} \right\vert & $\left\vert\avgerror{\washoutfilterstate}\right\vert > r_{\washoutfilterstate}$ \\
		\label{eq:global-results:lyapunov-function:washout-derivative:b}
		\derivative[r_{\washoutfilterstate}]{t}  & $\left\vert \avgerror{\washoutfilterstate} \right\vert < r_{\washoutfilterstate}$ \\
		\label{eq:global-results:lyapunov-function:washout-derivative:c}
		\max\left\lbrace \derivative[r_{\washoutfilterstate}]{t} , -\left\vert\derivative[\avgerror{\washoutfilterstate}]{t}\right\vert \right\rbrace & $\left\vert \avgerror{\washoutfilterstate} \right\vert = r_{\washoutfilterstate}$
	\end{subnumcases}
	and
	\begin{subnumcases}{\derivative[\lyapunov_{\gradientmagnitudesquared_i}]{t} = }
		\label{eq:global-results:lyapunov-function:ges-derivative:a}
		 -\left\vert \derivative[\avgerror{\gradientmagnitudesquared}_i]{t} \right\vert & $\left\vert\avgerror{\gradientmagnitudesquared}_i\right\vert > r_{\gradientmagnitudesquared_i}$ \\
		\label{eq:global-results:lyapunov-function:ges-derivative:b}
		\derivative[r_{\gradientmagnitudesquared_i}]{t}  & $\left\vert \avgerror{\gradientmagnitudesquared}_i \right\vert < r_{\gradientmagnitudesquared_i}$ \\
		\label{eq:global-results:lyapunov-function:ges-derivative:c}
		\max\left\lbrace \derivative[r_{\gradientmagnitudesquared_i}]{t} , -\left\vert\derivative[\avgerror{\gradientmagnitudesquared}_i]{t}\right\vert \right\rbrace & $\left\vert \avgerror{\gradientmagnitudesquared}_i \right\vert = r_{\gradientmagnitudesquared_i}$
	\end{subnumcases}
	 We need to establish that these Lie derivatives are negative semi-definite functions with respect to their arguments. We shall individually examine the cases of the Lie derivatives, namely whenever the error variable is outside, in the interior, or on the boundary of the bounding ball. Whenever the error variable is outside of the bounding ball as in  \eqref{eq:global-results:lyapunov-function:washout-derivative:a} and \eqref{eq:global-results:lyapunov-function:ges-derivative:a}, the Lie derivative of the magnitude of the error is negative, for example in $\avgerror{\washoutfilterstate}$ where
	\setcounter{equation}{43}
	\begin{align}
		\derivative{t}\left\vert \avgerror{\washoutfilterstate} \right\vert &{}={} - \omega_{\washoutfilterstate} \sgn\left(\avgerror{\washoutfilterstate}\right) \left(\avgerror{\washoutfilterstate} - \avgerror{\costfunction}\left(\avgerror{\parameter}\right)\right) \\
		&{}\leq{} -\omega_l\left(\left\vert\avgerror{\washoutfilterstate}\right\vert - r_{\washoutfilterstate}\left(\lyapunov_{\parameter}\left(\avgerror{\parameter}\right)\right)\right)\leq 0,  \ \text{if}\ \left\vert\avgerror{\washoutfilterstate}\right\vert \geq r_{\washoutfilterstate}
	\end{align}
	with equality only being possible whenever the error variable is on the boundary of the bounding ball ($\left\vert\avgerror{\washoutfilterstate}\right\vert = r_{\washoutfilterstate}$). The other error filter states $\avgerror{\gradientmagnitudesquared}_i$ have similar arguments for $\derivative{t}\left\vert \avgerror{\gradientmagnitudesquared}_i \right\vert \leq 0$ if $\left\vert \avgerror{\gradientmagnitudesquared}_i \right\vert \geq r_{\gradientmagnitudesquared_i}$.
	
	The other consideration is whenever the error variable is contained within the bounding ball. In such an instance, it is radii of the bounding balls which are the active element in the maximization in \eqref{eq:global-results:lyapunov-function:washout-filter} and \eqref{eq:global-results:lyapunov-function:alt-gradient-magnitude-squared}. The Lie derivative of $r_{\washoutfilterstate}$ can be upper bounded by zero since
	\begin{equation}
		\derivative{t}r_{\washoutfilterstate}\left(\avgerror{\parameter}\right) \leq -D_-r_{\washoutfilterstate}\left(\lyapunov_{\parameter}\left(\avgerror{\parameter}\right)\right)\cdot \derivative{t}\lyapunov_{\parameter}\left(\avgerror{\parameter}\right) \leq 0
	\end{equation}
	and therefore $\derivative{t}r_{\washoutfilterstate}$ is a negative semidefinite function with respect to its arguments. We also know that the condition \eqref{eq:global-results:lyapunov-function:washout-derivative:c} is a maximization of two negative semidefinite functions and thus a negative semidefinite function itself. All components of $\derivative{t}\lyapunov_{\washoutfilterstate}$ are negative semidefinite functions which means $\derivative{t}\lyapunov_{\washoutfilterstate}$ must be as well. By a similar argument, $\derivative{t}\lyapunov_{\gradientmagnitudesquared_i}$ is a negative semidefinite function as seen in \eqref{eq:global-results:r-ges-derivative-bound}.
	
	\begin{figure*}[!t]
		\normalsize
		\begin{multline}
			\label{eq:global-results:r-ges-derivative-bound}
			\derivative{t}r_{\gradientmagnitudesquared_i}\left(\lyapunov_{\parameter}\left(\avgerror{\parameter}\right), \lyapunov_{\washoutfilterstate}\left(\avgerror{\washoutfilterstate};\avgerror{\parameter}\right) \right) \leq D_{-}r_{\gradientmagnitudesquared_i}\left(\left(\lyapunov_{\parameter}\left(\avgerror{\parameter}\right), \lyapunov_{\washoutfilterstate}\left(\avgerror{\washoutfilterstate};\avgerror{\parameter}\right) \right), \unitvector_{c_{\parameter}}\right)\cdot\derivative{t}\lyapunov_{\parameter}\left(\avgerror{\parameter}\right) \\ +  D_{-}r_{\gradientmagnitudesquared_i}\left(\left(\lyapunov_{\parameter}\left(\avgerror{\parameter}\right), \lyapunov_{\washoutfilterstate}\left(\avgerror{\washoutfilterstate};\avgerror{\parameter}\right) \right), \unitvector_{c_{\washoutfilterstate}}\right)\cdot\derivative{t}\lyapunov_{\washoutfilterstate}\left(\avgerror{\washoutfilterstate}; \avgerror{\parameter}\right)\leq 0
		\end{multline}
		\hrulefill
		\vspace*{4pt}
	\end{figure*}
	
	We have proven that $\derivative{t}\lyapunov_{\parameter}$, $\derivative{t}\lyapunov_{\washoutfilterstate}$, and $\derivative{t}\lyapunov_{\gradientmagnitudesquared_i}$ are all negative semidefinite functions, what remains to be shown is that the sum of all these negative semidefinite functions is a negative definite function. If $\derivative{t}\lyapunov$ is zero, then all of its component terms must also be zero. However, $\derivative{t}\lyapunov_{\parameter} = 0$ implies that both $\avgerror{\parameter} = 0$ and $r_{\washoutfilterstate}(0) = 0$ which in turn implies $\avgerror{\washoutfilterstate} = 0$ and $r_{\gradientmagnitudesquared_i}(0,0) = 0$ meaning $\avgerror{\gradientmagnitudesquared_i} = 0$ for all $i=1,\ldots,n$. Consequently, the only zero of $\derivative{t}\lyapunov$ is at the origin of the error coordinate system. Thus we can conclude that the origin of the error coordinate system is GUAS for the average RMSpESC system \cite[Th~4.10]{ref:khalil-2002} and sGPUAS for the oringal RMSpESC system \cite{ref:teel-1998}.
\end{proof}
	
\section{Numerical Example}

Having proven the theoretical aspects of the RMSpESC, we will demonstrate some qualitative aspects. Consider the scalar quartic $\costfunction\left(\parameter\right) = \frac{1}{24}\parameter^4$, a function more difficult than the quadratic discussed earlier since the Hessian $\nabla^2\costfunction\left(\stationary{\parameter}\right) = 0$. While not shown, \cite{ref:mcnamee-2024} does prove the local practical exponential stability for the GESC, albeit with some unknown rate, for any finite, nonzero $a$.

While the effects of the washout filter on gradient estimate were nonexistent in the average system, there is a noticeable impact on the trajectories in the original system. Observing the trajectories in Fig~\ref{fig:quartic-trajectories}, it becomes apparent that the trajectories of the GESC can be rapidly changing with fast oscillations if the initial washout filter state is chosen poorly as would frequently occur when guessing for $\costfunction$ which is {\itshape a priori} unknown. In contrast, the RMSpESC does not experience oscillations as severe and instead a more moderate convergence rate. In particular, the convergence is somewhat linear for the time between roughly 30 and 60 seconds which is what was desired of the RMSpESC. In addition, when all trajectories in Fig~\ref{fig:quartic-trajectories} reach a region of little variation in $\costfunction$, such as after 60 seconds, the RMSpESC will have a faster convergence rate than the GESC for appropriately selected simulation parameters. It is this combination of the reduction of fast oscillations observed in the GESC as well as the increased convergence rate around shallow minima that makes the RMSpESC so attractive for future implementations.

\begin{figure}[t]
	\centering
	\includegraphics[width=3.25in]{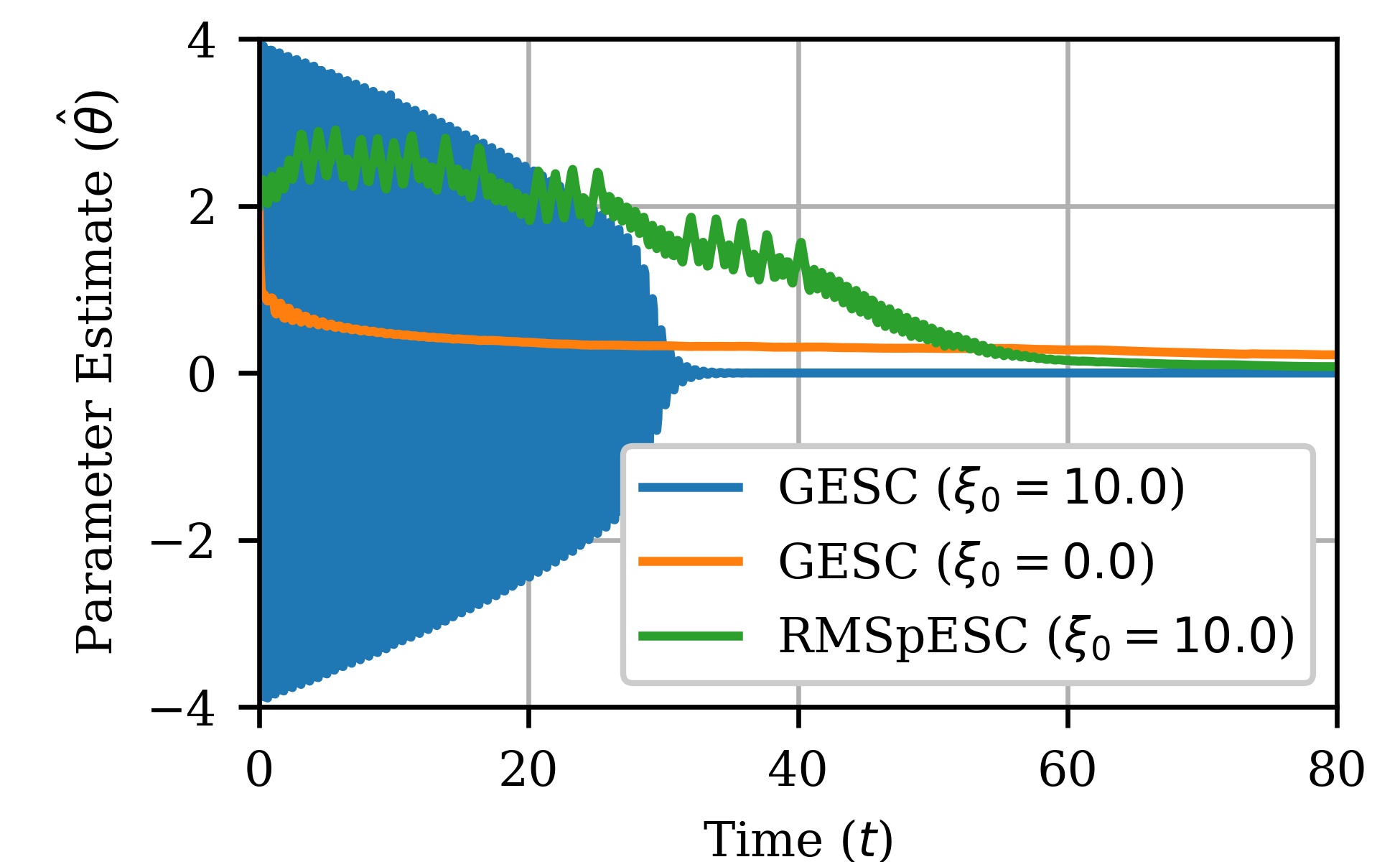}
	\caption{Parameter estimates for the example quartic scalar cost function with different initial washout filter states $\washoutfilterstate_0$. Simulations parameters where $a=0.02$, $\omega=10$, $\omega_{\washoutfilterstate}=1$, $\omega_{l} = \frac{1}{4}$, $\varepsilon=0.05$, and $\gain=1$. The initial conditions for the other states were $\estimate{\parameter}_0 = 2$ and $\estimate{\gradientmagnitudesquared}_0=0.81$.}
	\label{fig:quartic-trajectories}
\end{figure}

\section{Conclusions}

In this work we have analyzed the novel inclusion of the RMSprop optimizer with ESC. In particular we have shown semiglobal practical stability of the RMSpESC and some of the RMSpESC local stability behaviors. Of note is the use of the Lyapunov function for proving the sGPUAS property. This Lyapunov function was constructed based on the observation of the existence of contractive attractive sets and this Lyapunov function shows promising inspiration for usage in interconnected dynamical systems.

\bibliographystyle{IEEEtranS}
\bibliography{refs}

\appendices

\section{Properties of Estimates in the Average System}
	\label{app:average-estimates}

\begin{lem}
	\label{lem:average-estimate:gradient-estimate-converges}
	Given a cost function $\costfunction:\realnumbers^n\to\realnumbers$ that satisfies Assumption \ref{asmp:continuously-differentiable} and the sinusoidal dither signal $\dithersignal(t)$, the average gradient estimate $\avgest{\gradient}$ converges to $\nabla\costfunction$ ($\avgest{\gradient}\to\costfunction$) as $\dithersignal(t)$ vanishes ($a_0 \to 0$).
\end{lem}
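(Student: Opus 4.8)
The plan is to substitute the perturbation-based estimate into the averaging integral, expand $\costfunction$ to first order along the dither, and extract the desired component by orthogonality of the dither harmonics. First I would insert $\estimate{\gradient}_i = \gradientfilter_i(t)\bigl(\costfunction(\avgest{\parameter}+\dithersignal)-\washoutfilterstate\bigr)$ into \eqref{eq:adaptive:rmsprop:average-gradient}; since $\gradientfilter_i$ has zero mean over one period, the washout term drops out exactly as already noted, and the $T$-periodicity of the integrand (using $r_i\in\integernumbers_{\neq 0}$ and $T=2\pi/\omega$) lets me reduce the averaging interval to a single period, leaving
\begin{equation*}
	\avgest{\gradient}_i(\avgest{\parameter}) = \frac{1}{T}\int_0^T \frac{2}{a_i}\sin(\omega r_i\sigma)\,\costfunction\bigl(\avgest{\parameter}+\dithersignal(\sigma)\bigr)\,d\sigma .
\end{equation*}

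Because Assumption \ref{asmp:continuously-differentiable} only supplies $\costfunction\in\continuous^1$, I would deliberately avoid a second-order Taylor expansion and instead write the exact first-order increment by the fundamental theorem of calculus along the segment from $\avgest{\parameter}$ to $\avgest{\parameter}+\dithersignal(\sigma)$,
\begin{equation*}
	\costfunction\bigl(\avgest{\parameter}+\dithersignal(\sigma)\bigr) = \costfunction(\avgest{\parameter}) + \int_0^1 \sum_{j=1}^n \frac{\partial\costfunction}{\partial\parameter_j}\bigl(\avgest{\parameter}+\rho\,\dithersignal(\sigma)\bigr)\,a_j\sin(\omega r_j\sigma)\,d\rho .
\end{equation*}
The constant term $\costfunction(\avgest{\parameter})$ integrates to zero against $\sin(\omega r_i\sigma)$ over a full period, so only the gradient contribution survives, expressing $\avgest{\gradient}_i(\avgest{\parameter})$ as a sum over $j$ of terms weighted by $\tfrac{2a_j}{a_i}$ times $\tfrac{1}{T}\int_0^T\!\int_0^1 \tfrac{\partial\costfunction}{\partial\parameter_j}(\avgest{\parameter}+\rho\,\dithersignal(\sigma))\sin(\omega r_i\sigma)\sin(\omega r_j\sigma)\,d\rho\,d\sigma$.

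The main obstacle will be justifying the passage to the limit $a_0\to 0$ inside the integral with only $\continuous^1$ regularity, since there is no second derivative to control the remainder. Here I would invoke that $\nabla\costfunction$ is continuous, hence uniformly continuous on the compact set swept out by $\avgest{\parameter}+\rho\,\dithersignal(\sigma)$ for $\rho\in[0,1]$, $\sigma\in[0,T]$, and $a_0$ bounded; because the ratios $a_j/a_0$ are fixed, $\Vert\dithersignal\Vert\to 0$ as $a_0\to 0$, so $\tfrac{\partial\costfunction}{\partial\parameter_j}(\avgest{\parameter}+\rho\,\dithersignal(\sigma))\to\tfrac{\partial\costfunction}{\partial\parameter_j}(\avgest{\parameter})$ uniformly in $(\rho,\sigma)$, which permits exchanging the limit with the bounded integrals. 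In the limit the surviving weights are $\tfrac{1}{T}\int_0^T \sin(\omega r_i\sigma)\sin(\omega r_j\sigma)\,d\sigma = \tfrac12\delta_{ij}$, the orthogonality holding provided the chosen harmonic magnitudes $\vert r_i\vert$ are distinct, so every off-diagonal term vanishes and the diagonal term contributes $\tfrac{2a_i}{a_i}\cdot\tfrac12\,\tfrac{\partial\costfunction}{\partial\parameter_i}(\avgest{\parameter}) = \tfrac{\partial\costfunction}{\partial\parameter_i}(\avgest{\parameter})$. Stacking the components yields $\avgest{\gradient}\to\nabla\costfunction$ as $a_0\to 0$. I would finally remark that the same uniform-continuity estimate delivers convergence uniform on compact parameter sets, which is precisely the form needed to make the $\eta_i$ in Lemma \ref{lem:global-results:lyapunov-function:parameter:level-set-properties} arbitrarily small on a fixed level set.
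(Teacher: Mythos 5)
Your proof is correct and follows essentially the same route as the paper's: an exact first-order expansion of $\costfunction$ along the dither using only $\continuous^1$ regularity (you use the integral form of the remainder, the paper the mean-value form), exact extraction of the affine part by the demodulation signal (which you make explicit via the orthogonality $\tfrac{1}{T}\int_0^T\sin(\omega r_i\sigma)\sin(\omega r_j\sigma)\,d\sigma=\tfrac12\delta_{ij}$ for distinct $\vert r_i\vert$), and continuity of $\nabla\costfunction$ to drive the error to zero as $a_0\to 0$. The only substantive additions are your explicit statement of the distinct-harmonics condition, which the paper leaves implicit, and the useful remark that the convergence is uniform on compact parameter sets.
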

\begin{proof}
	To prove convergence, we will use an $\varepsilon-\delta$ type proof to show that at any $\avgest{\parameter}\in\realnumbers^n$ and any $\varepsilon_i > 0$, $\exists\ \delta_i > 0$ such that for any sinusoidal dither signal $\dithersignal(t)$ satisfying $a_0 < \delta_i$, $\left\vert \average{\gradient}_i\left(\avgest{\parameter}\right) - \frac{\partial\costfunction}{\partial\parameter_i}\left(\avgest{\parameter}\right)\right\vert < \varepsilon_i$.
	
	Taylor's Remainder Theorem for a scalar function $f$ with $f\in\continuous^1$ is
	\begin{equation}
		f(x + y) = f(x) + \frac{df}{dx}\left(x + \chi(y)\right) y
	\end{equation}
	where $\chi$ is a function with the output $\chi(y)\in (0, y)$. This is simply an alternative form to the Intermediate Value Theorem but more applicable to this proof. The Remainder Theorem can be extended to multidimensions by simply looking along a line segment. The extension is
	\begin{equation}
		\label{eq:avg-est-properties:convergent-estimates:multidimensional}
		\costfunction\left(\avgest{\parameter} + \phi\right) = \costfunction\left(\avgest{\parameter}\right) + \Vert \phi \Vert \left\langle \unitvector_{\phi}, \nabla\costfunction\left(\avgest{\parameter} + \chi_{\phi}\left(\Vert \phi \Vert \right)\phi\right)\right\rangle
	\end{equation}
	where now $\chi_{\phi}$ is specific to the direction of the vector $\phi$ and $\unitvector_{\phi}$ is the unit vector of aligned with $\phi$.
	
	The estimation signal $\gradientfilter(t)$ was chosen so that it was $\average{\gradient} = \nabla\costfunction$ for any $\costfunction$ that was an affine function such as the hyperplane fixed at the point $\avgest{\parameter}$. Thus using magnitude of the difference between the average estimate and the true gradient element value can be bounded by how much $\costfunction$ locally deviates from the hyperplane fixed at $\avgest{\parameter}$.
	\begin{multline}
		\left\vert \avgest{\gradient}_i\left(\avgest{\parameter}\right) - \frac{\partial\costfunction}{\partial\parameter_i}\left(\avgest{\parameter}\right) \right\vert = \frac{1}{T}\left\vert \int_{0}^T \gradientfilter_i(\tau) \left\langle \dithersignal(\tau), \right.\right. \\ \left.\left. \nabla\costfunction\left(\avgest{\parameter} + \chi_{\dithersignal(\tau)}\left(\Vert \dithersignal(t) \Vert \right)\dithersignal(t)\right) - \nabla\costfunction\left(\avgest{\parameter}\right) \right\rangle d\tau \right\vert
	\end{multline}
	\begin{equation}
		\label{eq:average-estimates:gradient-estimate-error-component}
		\leq \frac{2 a_0}{\vert a_i \vert T} \int_{0}^T \left\Vert \nabla\costfunction\left(\avgest{\parameter} + \chi_{\dithersignal(t)}\left(\Vert \dithersignal(t) \Vert \right)\dithersignal(t)\right) - \nabla\costfunction\left(\avgest{\parameter}\right) \right\Vert d\tau
	\end{equation}
	By Assumption \ref{asmp:continuously-differentiable}, there is $\varepsilon-\delta$ convergence of $\nabla\costfunction$ locally around $\avgest{\parameter}$ which implies that for $a_0 < \delta$
	\begin{equation}
		\left\Vert \avgest{\gradient}\left(\avgest{\parameter}\right) - \nabla\costfunction\left(\avgest{\parameter}\right) \right\Vert \leq 2 \left(\sum_{i=1}^{n} \left(\frac{ a_i }{ a_0 }\right)^{-2}\right)^{1/2} \varepsilon
	\end{equation}
	Hence for $\dithersignal$ with fixed relative dither amplitudes, we can make the average estimation error arbitrarily small by using arbitrarily small perturbations.
\end{proof}

\begin{cor}
	\label{col:average-estimate:gradient-squared-magnitude-converges}
	The equilibrium of the average gradient magnitude squared estimate $\stationary[i,]{\gradientmagnitudesquared}\to 0$ as $a_0\to 0$.
\end{cor}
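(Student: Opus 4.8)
The plan is to evaluate the equilibrium value $\stationary[i,]{\gradientmagnitudesquared} = \avgges(\stationary{\parameter}, \stationary{\washoutfilterstate})$ directly from its definition \eqref{eq:adaptive:rmsprop:average-gradient-squared-magnitude}, exploiting that the washout equilibrium $\stationary{\washoutfilterstate} = \average{\costfunction}(\stationary{\parameter})$ is exactly the dither-average of the cost. Substituting $\gradientfilter_i(t) = \frac{2}{a_i}\sin(\omega r_i t)$ gives
\[
  \stationary[i,]{\gradientmagnitudesquared} = \frac{1}{T}\int_{0}^{T} \frac{4}{a_i^2}\sin^2(\omega r_i \tau)\left(\costfunction\left(\stationary{\parameter} + \dithersignal(\tau)\right) - \average{\costfunction}\left(\stationary{\parameter}\right)\right)^2 d\tau ,
\]
so the entire quantity is controlled by the squared deviation of $\costfunction$ along the dither orbit from its own mean. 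First I would reduce the corollary to bounding this centered deviation.

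Since $\Vert\dithersignal(\tau)\Vert \leq a_0$ by the definition of $a_0$, I would write the centered difference as $\frac{1}{T}\int_0^T\left[\costfunction(\stationary{\parameter}+\dithersignal(\tau)) - \costfunction(\stationary{\parameter}+\dithersignal(\sigma))\right]d\sigma$ and apply the same mean-value / Taylor-remainder device used in the proof of Lemma \ref{lem:average-estimate:gradient-estimate-converges}, bounding it by $2 a_0 M(a_0)$, where $M(a_0) := \sup_{\Vert\phi\Vert\leq a_0}\Vert\nabla\costfunction(\stationary{\parameter}+\phi)\Vert$ (the convex combinations appearing in the remainder all stay within the ball of radius $a_0$ about $\stationary{\parameter}$). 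Because $\sin^2\leq 1$ and $a_i^2 = (a_i/a_0)^2 a_0^2$, the prefactor $4/a_i^2$ absorbs the $a_0^2$, leaving
\[
  0 \leq \stationary[i,]{\gradientmagnitudesquared} \leq 16\left(\frac{a_i}{a_0}\right)^{-2} M(a_0)^2 .
\]
The fixed amplitude ratios $a_i/a_0$ keep the leading constant bounded, so everything reduces to showing $M(a_0)\to 0$ as $a_0\to 0$.

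The remaining step, which I expect to be the main obstacle, is to show that the gradient is uniformly small over the shrinking ball of radius $a_0$ about the (moving) equilibrium $\stationary{\parameter}=\stationary{\parameter}(a_0)$. I would first argue $\stationary{\parameter}(a_0)\to\optimal{\parameter}$: the parameter equilibrium satisfies $\avgest{\gradient}(\stationary{\parameter}) = 0$, and Lemma \ref{lem:average-estimate:gradient-estimate-converges} gives $\avgest{\gradient}\to\nabla\costfunction$, so $\nabla\costfunction(\stationary{\parameter})\to 0$; by Assumption \ref{asmp:minimum-is-unique-stationary-point} the unique zero of $\nabla\costfunction$ is $\optimal{\parameter}$, while radial unboundedness (Assumption \ref{asmp:cost-function-radially-unbounded}) prevents $\stationary{\parameter}$ from escaping to infinity, forcing $\stationary{\parameter}\to\optimal{\parameter}$. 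Then, since $\nabla\costfunction$ is continuous (Assumption \ref{asmp:continuously-differentiable}) and the set $\{\stationary{\parameter}(a_0)+\phi : \Vert\phi\Vert\leq a_0\}$ collapses onto $\{\optimal{\parameter}\}$, uniform continuity on a fixed compact neighborhood yields $M(a_0)\to\Vert\nabla\costfunction(\optimal{\parameter})\Vert = 0$. The delicate part is making this passage uniform --- upgrading pointwise smallness of $\nabla\costfunction(\stationary{\parameter})$ to smallness over the whole ball of radius $a_0$ --- but once that is established the displayed bound gives $\stationary[i,]{\gradientmagnitudesquared}\to 0$.
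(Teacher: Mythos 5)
Your proposal is correct and rests on the same core computation as the paper's proof --- expand $\avgges$ at the equilibrium, use $\stationary{\washoutfilterstate}=\average{\costfunction}\left(\stationary{\parameter}\right)$ to center the cost along the dither orbit, bound the centered deviation via a mean-value/Taylor-remainder estimate by $\order{a_0}\cdot\sup\Vert\nabla\costfunction\Vert$, and let the resulting $a_0^2$ cancel against the $4/a_i^2$ from $\gradientfilter_i^2$ --- but you go a meaningful step further. The paper's two-line proof bounds the residual only by the $\varepsilon$ coming from the continuity of $\nabla\costfunction$, i.e.\ by the oscillation $\Vert\nabla\costfunction(\stationary{\parameter}+\phi)-\nabla\costfunction(\stationary{\parameter})\Vert$ over the dither ball; this silently discards the first-order term $\langle\dithersignal(t),\nabla\costfunction(\stationary{\parameter})\rangle$, which contributes an $\order{\Vert\nabla\costfunction(\stationary{\parameter})\Vert^2}$ amount to $\stationary[i,]{\gradientmagnitudesquared}$ that does \emph{not} vanish with $a_0$ (compare the $\tfrac{3}{2}(\tfrac{1}{2}b_1)^2$ term in the quadratic example, which dies only because $b_1(\stationary{\parameter})=0$ there). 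Your reduction to $M(a_0)\to 0$ and the argument that $\stationary{\parameter}(a_0)\to\optimal{\parameter}$ --- via $\avgest{\gradient}(\stationary{\parameter})=0$, Lemma~\ref{lem:average-estimate:gradient-estimate-converges}, and Assumption~\ref{asmp:minimum-is-unique-stationary-point} --- is exactly the justification the published bound tacitly presupposes, so your version is the more complete one. The one soft spot is the claim that radial unboundedness of $\costfunction$ keeps the family $\stationary{\parameter}(a_0)$ in a compact set: Assumptions~\ref{asmp:continuously-differentiable}--\ref{asmp:cost-function-radially-unbounded} do not prevent $\Vert\nabla\costfunction\Vert$ from decaying at infinity, so a priori boundedness of the zeros of $\avgest{\gradient}$ (and the uniformity of the Lemma~\ref{lem:average-estimate:gradient-estimate-converges} convergence over them) needs a separate argument; you flag this honestly, and the paper has the same unacknowledged gap.
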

\begin{proof}
	It is already known that $\average{\washoutfilterstate}$ will remove the constant terms in $\average{\costfunction}$ at the equilibrium. Thus one can use \eqref{eq:average-estimates:gradient-estimate-error-component} and Assumption \ref{asmp:continuously-differentiable} to get
	\begin{equation}
		\left\vert \avgges\left(\stationary{\parameter}\right) \right\vert \leq \frac{4 a_0^2}{a_i^2}\varepsilon^2
	\end{equation}
	for any $\varepsilon>0$ where $a_0 < \delta$, where $\varepsilon,\delta$ are from the $\varepsilon-\delta$ continuity of $\nabla\costfunction$.
\end{proof}

\begin{rem}
	\label{rem:average-estimate:continuously-differentiable}
	Given a cost function $\costfunction:\realnumbers^n\to\realnumbers$ that satisfies \ref{asmp:continuously-differentiable} and the sinusoidal signals $\dithersignal$ and $\gradientfilter$, the functions $\average{\costfunction}$, $\average{\gradient}_i$, and $\avgges$ for $i=1,\ldots,n$ are all continuously differentiable with respect to $\avgest{\parameter}$. Their derivatives are simply differentiation of \eqref{eq:adaptive:rmsprop:average-costfunction}-\eqref{eq:adaptive:rmsprop:average-gradient-squared-magnitude}.
\end{rem}

\end{document}